\definecolor{bgcolor}{rgb}{0.8,1,1}
\definecolor{bgcolor2}{rgb}{0.8,1,0.8}
\def \R {\mathbb R}
\newtheorem{assumption}{Assumption}
\def\R{\mathbb{R}}
\def\R{\mathbb R}
\def\EE{\mathbb E}
\def\la{\langle}
\def\ra{\rangle}
\newcommand{\eqdef}{\vcentcolon=}
\newcommand{\EndProof}{\begin{flushright}$\square$\end{flushright}}
\def\<#1,#2>{\langle #1,#2\rangle}
\begin{document} 
\title{SARAH-based Variance-reduced Algorithm for Stochastic Finite-sum Cocoercive Variational Inequalities}
\titlerunning{SARAH for Stochastic Cocoercive Variational Inequalities}
%
\author{Aleksandr Beznosikov\inst{1,2}
Alexander Gasnikov\inst{1,3,4}
}
\authorrunning{A. Beznosikov, A. Gasnikov}
%
\institute{Moscow Institute of Physics and Technology, Dolgoprudny, Russia \and
HSE University, Moscow, Russia \and
IITP RAS, Moscow, Russia \and
Caucasus Mathematical Center, Adyghe State University, Maikop, Russia
}
\maketitle              
\begin{abstract}
Variational inequalities are a broad formalism that encompasses a vast number of applications. Motivated by applications in machine learning and beyond, stochastic methods are of great importance. In this paper we consider the problem of stochastic finite-sum cocoercive variational inequalities. For this class of problems, we investigate the convergence of the method based on the SARAH variance reduction technique. We show that for strongly monotone problems it is possible to achieve linear convergence to a solution using this method. Experiments confirm the importance and practical applicability of our approach.

\keywords{stochastic optimization \and variational inequalities \and finite-sum problems}
\end{abstract}
\section{Introduction}

In this paper we focus on the following unconstrained variational inequality (VI) problem:
\begin{equation}
    \label{eq:VI}
   \text{Find} ~~ z^* \in \R^d ~~ \text{such that} ~~ F(z^*) = 0,
\end{equation}
where $F: \R^d \to \R^d $ is some operator. This formulation is broad and encompasses many popular classes of tasks arising in practice. The simplest, however, widely encountered example of the VI is the minimization problem: 
$$
\min_{z \in \R^d} f(z).
$$
To represent it in the form \eqref{eq:VI}, it is sufficient to take $F(z) = \nabla f(z)$. As another also popular practical example, we can consider a saddle point or min-max problem:
$$
    \min_{x \in \R^{d_x}} \max_{y \in \R^{d_y}} g(x,y).
$$
Here we need to take $F(z) = [\nabla_x g(x,y), -\nabla_y g(x,y)]$.

From a machine learning perspective, it is interesting not the deterministic formulation \eqref{eq:VI}, but the stochastic one. More specifically, we want to consider the setup with the operator $F(z) = \EE_{\xi \sim \mathcal{D}} \left[F_{\xi}(z)\right]$, where $\xi$ is a random variable, $\mathcal{D}$ is some distribution, $F_{\xi}: \R^d \to \R^d $ is a stochastic operator. But
it is often the case (especially in practical problems) that the distribution $\mathcal{D}$ is unknown, but we have some samples from $\mathcal{D}$. Then, one can replace with a finite-sum  Monte Carlo approximation, i.e.
\begin{equation}
    \label{eq:fs}
    F(z) = \frac{1}{n} \sum\limits_{i=1}^n F_i(z).
\end{equation}
In the case of minimization problems, statements of the form \eqref{eq:VI}+\eqref{eq:fs} are also called empirical risk minimization \cite{shalev2014understanding}. These types of problems arise both in classical machine learning problems such as simple regressions and in complex, large-scale problems such as neural networks \cite{lecun2015deep}. When it comes to saddle point problems, in recent times the so-called adversarial approach has become popular. Here one can highlight Generative Adversarial Networks (GANs) \cite{goodfellow2020generative} and the adversarial training of models \cite{zhu2019freelb,liu2020adversarial}.

Based on the examples mentioned above, it can be noted that for operators of the form \eqref{eq:fs}, computing the full value of $F$ is a possible but not desirable operation, since it is typically very expensive compared to computing a single operator $F_i$. Therefore, when constructing an algorithm for the problem \eqref{eq:VI}+\eqref{eq:fs}, one wants to avoid computing (or compute very rarely) the full $F$ operator. This task can be solved by a stochastic gradient descent (SGD) framework. Currently, stochastic methods for minimization problems already have a huge background \cite{gorbunov2020unified}. 
The first methods of this type were proposed back in the 1950s by \cite{robbins1951stochastic}.
For example, in the most classic variant, SGD could be written as follows:
\begin{equation}
z^{k+1} = z^k - \eta v^k, \label{eq:SGDstep}
\end{equation}
where $\eta > 0$ is a predefined step-size and $v^k = \nabla f_i(z^k)$, where $i\in [n]$ is chosen randomly 
\cite{shalev2011pegasos}. In this case, the variance of $v_t$ is the main source of slower convergence or convergence only to the neighbourhood of the solution \cite{bottou2018optimization,nguyen2019new,khaled2020better}. 

But for minimization problems of the finite-sum type, one can achieve stronger theoretical and practical results compared to the method \eqref{eq:SGDstep}. This requires the use of a variance reduction technique. Recently, many  variance-reduced variants of SGD have been proposed, including SAG/SAGA \cite{schmidt2017minimizing,defazio2014saga,qian2019saga}, SVRG \cite{johnson2013accelerating,allen2016improved,yang2021accelerating},
MISO \cite{mairal2015incremental},
SARAH \cite{nguyen2017sarah,nguyen2021inexact,nguyen2017stochastic,hu2019efficient}, 
SPIDER \cite{fang2018spider}, STORM \cite{cutkosky2019momentum}, PAGE~\cite{li2021page}.
The essence of one of the earliest and best known variance-reduced methods SVRG is to use $v^k = \nabla f_i(z^k) - \nabla f_i(\tilde z) + \nabla f(\tilde z)$, where $i\in [n]$ is picked at random, where $i\in [n]$ is picked at random and the point $\tilde z$ is updated very rarely (hence we do not need to compute the full gradient often). With this type of methods it is possible to achieve a linear convergence to the solution.
But for both convex and non-convex smooth minimization problems, the best theoretical guarantees of convergence are given by other variance-reduced technique SARAH (and its modifications: SPIDER, STORM, PAGE).

In turn, stochastic methods are also investigated for variational inequalities and saddle point problems \cite{juditsky2011solving,gidel2018variational,hsieh2019convergence,mishchenko2020revisiting,hsieh2020explore,beznosikov2020distributed,gorbunov2022stochastic,beznosikov2022unified,beznosikov2022stochastic}, including methods based on variance reduction techniques \cite{palaniappan2016stochastic,chavdarova2019reducing,Yura2021,alacaoglu2021stochastic,kovalev2022optimal,beznosikov2022unified,beznosikov2022stochastic}. Most of these methods are based on the SVRG approach. At the same time, SARAH-based methods have not been explored for VIs. But as we noted earlier, these methods are the most attractive from the theoretical point of view for minimization problems. The purpose of this paper is to partially close the question of SARAH approach for stochastic finite-sum variational inequalities.

\section{Problem setup and assumptions}



\textbf{Notation.} We use $\la x,y \ra \eqdef \sum_{i=1}^nx_i y_i$ to denote standard inner product of $x,y\in\R^d$ where $x_i$ corresponds to the $i$-th component of $x$ in the standard basis in $\R^d$. It induces $\ell_2$-norm in $\R^d$ in the following way $\|x\|_2 \eqdef \sqrt{\la x, x \ra}$.

Recall that we consider the problem \eqref{eq:VI}, where the operator F has the form \eqref{eq:fs}. Additionally, we assume

\begin{assumption}[Cocoercivity] \label{as:Lipsh}
Each operator $F_i$ is $\ell$-cocoercive, i.e. for all $u, v \in \R^d$ we have
\begin{equation}
\label{eq:Lipsh}
\| F_i(u)-F_i(v) \|^2  \leq \ell \langle F_i(u)-F_i(v) , u - v\rangle.
\end{equation}
\end{assumption}
This assumption is somehow a more restricted analogue of the Lipschetzness of $F_i$. For convex minimization problems, $\ell$-Lipschitzness and $\ell$-cocoercivity are equivalent.  Regarding variational inequalities and saddle point problems, see \cite{loizou2021stochastic}.

\begin{assumption}[Strong monotonicity]\label{as:strmon}
The operator $F$ is $\mu$-strongly monotone, i.e. for all $u, v \in \R^d$ we have
\begin{equation}
\label{eq:strmon}
\langle F(u) - F(v); u - v \rangle \geq \mu \| u-v\|^2.
\end{equation}
\end{assumption}
For minimization problems this property means strong convexity, and for saddle point problems strong convexity--strong concavity.

\section{Main part}

For general Lipschitzness variational inequalities, stochastic methods are usually based not on SGD, but on the Stochastic Extra Gradient method \cite{juditsky2011solving}. But due to the fact that we consider cocoercive VIs, it is sufficient to look at SGD like methods for this class of problems. For example, \cite{loizou2021stochastic} considers SGD, \cite{beznosikov2022stochastic} - SVRG. Following this reasoning, we base our method on the original SARAH \cite{nguyen2017sarah}.

\begin{algorithm}[H]
	\caption{\texttt{SARAH} \cite{nguyen2017sarah} for
Stochastic Cocoercive Variational Inequalities}
	\label{alg:sarah}
	\begin{algorithmic}[1]
\State
\noindent {\bf Parameters:}  Stepsize $\gamma>0$, number of iterations $K, S$.\\
\noindent {\bf Initialization:} Choose  $\tilde z^0 \in \R^d$.
\For {$s=1, 2, \ldots, S$ }
    \State $z^0 = \tilde z^{s-1}$
    \State $v^0 = F(z^0)$
    \State $z^1 = z^0 - \gamma v^0$
    \For {$k=1, 2, \ldots, K-1$ }
        \State Sample $i_k$ independently and uniformly from $[n]$
        \State $v^k = F_{i_k} (z^k) - F_{i_k} (z^{k-1}) + v^{k-1}$
        \State $z^{k+1} = z^k - \gamma v^k$
    \EndFor
    \State $\tilde z^s = z^K$
\EndFor
	\end{algorithmic}
\end{algorithm}

Next, we analyse the convergence of this method. Note that we will use the vector $v^K$ in the analysis, but in reality this vector is not calculated by the algorithm. Our proof are heavily based on the original work on SARAH \cite{nguyen2017sarah}. Lemma \ref{lem:1} gives an understanding of how $\| v^k\|^2$ behaves during the internal loop of Algorithm \ref{alg:sarah}.

\begin{lemma} \label{lem:1}
Suppose that Assumptions \ref{as:Lipsh} and \ref{as:strmon} hold. Consider SARAH (Algorithm \ref{alg:sarah}) with $\gamma \leq \tfrac{1}{\ell}$. Then, we have
\begin{align*}
    \EE[\| v^K \|^2] \leq&
    (1 - \gamma \mu)^K \EE[\| F(z^{0}) \|^2].
\end{align*}
\end{lemma}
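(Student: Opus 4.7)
The plan is to derive a one–step contraction of the form $\EE_k[\|v^k\|^2]\leq (1-\gamma\mu)\|v^{k-1}\|^2$ (where $\EE_k$ denotes conditional expectation given the history through step $k-1$), and then iterate. Combined with $v^0=F(z^0)$ and the tower property, this immediately yields the claim.

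To obtain the one–step inequality, I would first use the SARAH recursion $v^k = F_{i_k}(z^k)-F_{i_k}(z^{k-1})+v^{k-1}$ to expand
\[
\|v^k\|^2 = \|F_{i_k}(z^k)-F_{i_k}(z^{k-1})\|^2 + 2\la F_{i_k}(z^k)-F_{i_k}(z^{k-1}),v^{k-1}\ra + \|v^{k-1}\|^2.
\]
Taking $\EE_k[\cdot]$ and using that $i_k$ is sampled uniformly from $[n]$ so that $\EE_k[F_{i_k}(z^k)-F_{i_k}(z^{k-1})]=F(z^k)-F(z^{k-1})$, the cross term becomes deterministic (given the history), and cocoercivity (Assumption~\ref{as:Lipsh}) bounds the squared–norm term by $\ell\la F_{i_k}(z^k)-F_{i_k}(z^{k-1}),z^k-z^{k-1}\ra$, whose conditional expectation equals $\ell\la F(z^k)-F(z^{k-1}),z^k-z^{k-1}\ra$.

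Next, I would substitute $z^k-z^{k-1}=-\gamma v^{k-1}$ so that the cross term $2\la F(z^k)-F(z^{k-1}),v^{k-1}\ra$ rewrites as $-\tfrac{2}{\gamma}\la F(z^k)-F(z^{k-1}),z^k-z^{k-1}\ra$. Collecting the two terms involving $\la F(z^k)-F(z^{k-1}),z^k-z^{k-1}\ra$, the coefficient becomes $\ell-\tfrac{2}{\gamma}$, which is non-positive whenever $\gamma\leq 1/\ell$ (in fact $\leq 2/\ell$ suffices). I can then lower-bound the inner product using strong monotonicity (Assumption~\ref{as:strmon}):
\[
\la F(z^k)-F(z^{k-1}),z^k-z^{k-1}\ra \geq \mu\|z^k-z^{k-1}\|^2 = \mu\gamma^2\|v^{k-1}\|^2.
\]
Multiplying by the non-positive factor $\ell-\tfrac{2}{\gamma}$ (which flips the inequality) and using $\gamma\leq 1/\ell$ to deduce $\ell\gamma^2-2\gamma\leq -\gamma$, I obtain
\[
\EE_k[\|v^k\|^2] \leq \|v^{k-1}\|^2 + (\ell\gamma^2-2\gamma)\mu\|v^{k-1}\|^2 \leq (1-\gamma\mu)\|v^{k-1}\|^2.
\]

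Finally, taking full expectation and applying this bound recursively from $k=K$ down to $k=1$, and using $v^0=F(z^0)$, yields $\EE[\|v^K\|^2]\leq(1-\gamma\mu)^K\EE[\|F(z^0)\|^2]$, as desired. The only mildly delicate point is keeping the sign bookkeeping straight when combining the cocoercivity bound (which is an upper bound involving a \emph{positive} quantity) with the strong-monotonicity lower bound on that same quantity multiplied by a \emph{negative} coefficient; everything else is routine algebra and the standard conditioning argument.
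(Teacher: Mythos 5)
Your proposal is correct and follows essentially the same route as the paper: expand $\|v^k\|^2$ via the SARAH recursion, substitute $z^k-z^{k-1}=-\gamma v^{k-1}$, apply cocoercivity to absorb the $\|F_{i_k}(z^k)-F_{i_k}(z^{k-1})\|^2$ term and strong monotonicity to extract the $(1-\gamma\mu)$ contraction, then iterate. The only (immaterial) difference is bookkeeping: the paper splits the $-\tfrac{2}{\gamma}\langle\cdot,\cdot\rangle$ cross term into two halves and applies one assumption to each, whereas you first convert the squared-norm term via cocoercivity and then apply strong monotonicity to the combined coefficient $\ell-\tfrac{2}{\gamma}$; both yield the same bound under $\gamma\leq\tfrac{1}{\ell}$.
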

\begin{proof}
We start the proof with an update for $v^k$:
\begin{align*}
    \| v^k \|^2 
    =& 
    \| v^{k-1} \|^2 + \| F_{i_k} (z^k) - F_{i_k} (z^{k-1}) \|^2 + 2 \langle F_{i_k} (z^k) - F_{i_k} (z^{k-1}) , v^{k-1} \rangle.
\end{align*}
Next, we use an update for $z^k$ and make a small rearrangement
\begin{align*}
    \| v^k \|^2 
    =&
    \| v^{k-1} \|^2 + \| F_{i_k} (z^k) - F_{i_k} (z^{k-1}) \|^2 - \frac{2}{\gamma} \langle F_{i_k} (z^k) - F_{i_k} (z^{k-1}) , z^k - z^{k-1} \rangle
    \\
    =&
    \| v^{k-1} \|^2 + \| F_{i_k} (z^k) - F_{i_k} (z^{k-1}) \|^2 - \frac{1}{\gamma} \langle F_{i_k} (z^k) - F_{i_k} (z^{k-1}) , z^k - z^{k-1} \rangle
    \\
    &- \frac{1}{\gamma} \langle F_{i_k} (z^k) - F_{i_k} (z^{k-1}) , z^k - z^{k-1} \rangle.
\end{align*}
Taking the full mathematical expectation, we obtain
\begin{align*}
    \EE[\| v^k \|^2]
    =&
    \EE[\| v^{k-1} \|^2] + \EE[\| F_{i_k} (z^k) - F_{i_k} (z^{k-1}) \|^2]
    \\
    &- \frac{1}{\gamma} \EE[\langle F_{i_k} (z^k) - F_{i_k} (z^{k-1}) , z^k - z^{k-1} \rangle]
    \\
    &- \frac{1}{\gamma} \EE[\langle F_{i_k} (z^k) - F_{i_k} (z^{k-1}) , z^k - z^{k-1} \rangle].
\end{align*}
Independence of the $i_k$ generation gives
\begin{align*}
    \EE[\| v^k \|^2]
    =&
    \EE[\| v^{k-1} \|^2] + \EE[\| F_{i_k} (z^k) - F_{i_k} (z^{k-1}) \|^2] 
    \\
    &- \frac{1}{\gamma} \EE[\langle F_{i_k} (z^k) - F_{i_k} (z^{k-1}) , z^k - z^{k-1} \rangle]
    \\
    &- \frac{1}{\gamma} \EE[\langle \EE_{i_k}[F_{i_k} (z^k) - F_{i_k} (z^{k-1})] , z^k - z^{k-1} \rangle]
    \\
    =&
    \EE[\| v^{k-1} \|^2] + \EE[\| F_{i_k} (z^k) - F_{i_k} (z^{k-1}) \|^2] 
    \\
    &- \frac{1}{\gamma} \EE[\langle F_{i_k} (z^k) - F_{i_k} (z^{k-1}) , z^k - z^{k-1} \rangle]
    \\
    &- \frac{1}{\gamma} \EE[\langle F (z^k) - F (z^{k-1}) , z^k - z^{k-1} \rangle].
\end{align*}
With Assumptions \ref{as:Lipsh} and \ref{as:strmon}, we get
\begin{align*}
    \EE[\| v^k \|^2]
    \leq&
    \EE[\| v^{k-1} \|^2] + \EE[\| F_{i_k} (z^k) - F_{i_k} (z^{k-1}) \|^2]
    \\
    &- \frac{1}{\gamma \ell} \EE[\| F_{i_k} (z^k) - F_{i_k} (z^{k-1}) \|^2]
    \\
    &- \frac{\mu}{\gamma} \EE[\|z^k - z^{k-1} \|^2]
    \\
    =&
    (1 - \gamma \mu)\EE[\| v^{k-1} \|^2] + \left( \frac{\gamma \ell - 1}{\gamma \ell} \right)\EE[\| F_{i_k} (z^k) - F_{i_k} (z^{k-1}) \|^2].
\end{align*}
In the last step we substitute $z^{k-1} - z^k = \gamma v^k$. The choice of $0 < \gamma \leq \tfrac{1}{\ell}$ gives
\begin{align*}
    \EE[\| v^k \|^2] 
    \leq&
    (1 - \gamma \mu)\EE[\| v^{k-1} \|^2].
\end{align*}
Running recursion and using $v^0 = F(z^0)$, we finish the proof.
\EndProof
\end{proof}

The following lemma gives how different $v^K$ and $F(z^K)$ are in the inner loop of Algorithm \ref{alg:sarah}.

\begin{lemma} \label{lem:2}
Suppose that Assumption \ref{as:Lipsh} holds. Consider SARAH (Algorithm \ref{alg:sarah}). Then, we have
\begin{align*}
    \EE[\| F(z^K) - v^K\|^2] \leq  \frac{\gamma \ell}{2 - \gamma \ell} \EE[\|F(z^0)\|^2].
\end{align*}
\end{lemma}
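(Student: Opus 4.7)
The plan is to exploit the defining recursion of the SARAH estimator to express $v^K - F(z^K)$ as a telescoping sum of martingale differences, and then bound the resulting variances via the same cocoercivity/descent identity that powers Lemma~\ref{lem:1}.

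First, I unroll the recursion $v^k = F_{i_k}(z^k) - F_{i_k}(z^{k-1}) + v^{k-1}$ to write $v^K - v^0 = \sum_{k=1}^K \bigl(F_{i_k}(z^k) - F_{i_k}(z^{k-1})\bigr)$, and use $v^0 = F(z^0)$ together with the trivial identity $F(z^0) - F(z^K) = -\sum_{k=1}^K \bigl(F(z^k) - F(z^{k-1})\bigr)$ to get
\[
v^K - F(z^K) \;=\; \sum_{k=1}^K \xi_k, \qquad \xi_k \eqdef F_{i_k}(z^k) - F_{i_k}(z^{k-1}) - \bigl(F(z^k) - F(z^{k-1})\bigr).
\]
Since $i_k$ is sampled independently and $\EE_{i_k}[F_{i_k}(z)] = F(z)$, each $\xi_k$ has zero conditional mean given the history up to step $k-1$, so $\EE[\langle \xi_k, \xi_j\rangle] = 0$ for $j < k$. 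Consequently the cross-terms vanish and, using $\EE[\|X-\EE X\|^2] \leq \EE[\|X\|^2]$ for the random vector $X = F_{i_k}(z^k) - F_{i_k}(z^{k-1})$ conditioned on $z^k, z^{k-1}$, we obtain
\[
\EE\bigl[\| v^K - F(z^K)\|^2\bigr] \;=\; \sum_{k=1}^K \EE[\|\xi_k\|^2] \;\leq\; \sum_{k=1}^K \EE\bigl[\| F_{i_k}(z^k) - F_{i_k}(z^{k-1})\|^2\bigr].
\]

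The remaining task is to bound $\sum_{k=1}^K \EE\bigl[\| F_{i_k}(z^k) - F_{i_k}(z^{k-1})\|^2\bigr]$ by $\tfrac{\gamma\ell}{2-\gamma\ell}\EE[\|F(z^0)\|^2]$. For this I revisit the calculation in the proof of Lemma~\ref{lem:1}: after taking expectations and using $z^k - z^{k-1} = -\gamma v^{k-1}$, one arrives (before invoking strong monotonicity) at
\[
\EE[\|v^k\|^2] \;=\; \EE[\|v^{k-1}\|^2] + \EE\bigl[\| F_{i_k}(z^k) - F_{i_k}(z^{k-1})\|^2\bigr] - \tfrac{2}{\gamma}\EE\bigl[\langle F_{i_k}(z^k) - F_{i_k}(z^{k-1}), z^k - z^{k-1}\rangle\bigr].
\]
Applying the cocoercivity bound $\langle F_{i_k}(z^k) - F_{i_k}(z^{k-1}), z^k - z^{k-1}\rangle \geq \tfrac{1}{\ell}\|F_{i_k}(z^k) - F_{i_k}(z^{k-1})\|^2$ rearranges this into
\[
\tfrac{2-\gamma\ell}{\gamma\ell}\,\EE\bigl[\| F_{i_k}(z^k) - F_{i_k}(z^{k-1})\|^2\bigr] \;\leq\; \EE[\|v^{k-1}\|^2] - \EE[\|v^k\|^2].
\]
Telescoping $k=1,\ldots,K$ and dropping the non-positive $-\EE[\|v^K\|^2]$ term gives $\sum_{k=1}^K \EE\bigl[\| F_{i_k}(z^k) - F_{i_k}(z^{k-1})\|^2\bigr] \leq \tfrac{\gamma\ell}{2-\gamma\ell}\EE[\|v^0\|^2] = \tfrac{\gamma\ell}{2-\gamma\ell}\EE[\|F(z^0)\|^2]$, which combined with the martingale-difference bound above completes the proof.

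The main conceptual obstacle is the first step: recognising that even though $v^K$ is built via a biased-looking SARAH update, the \emph{error} $v^K - F(z^K)$ telescopes into a zero-mean martingale, so its squared norm equals a sum of per-step stochastic variances rather than exploding geometrically. After that insight, the control of $\sum\EE\|F_{i_k}(z^k) - F_{i_k}(z^{k-1})\|^2$ is essentially a repackaging of the descent inequality already derived for Lemma~\ref{lem:1}, and the condition $\gamma < 2/\ell$ (implicit in the $(2-\gamma\ell)^{-1}$ factor and guaranteed by the assumption $\gamma \leq 1/\ell$) ensures the coefficient has the correct sign.
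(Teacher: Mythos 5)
Your proof is correct and follows essentially the same route as the paper's: both arguments reduce to the intermediate bound $\EE[\|F(z^K)-v^K\|^2]\leq \sum_{k=1}^K \EE[\|v^k-v^{k-1}\|^2]$ (you obtain it by summing the martingale differences $\xi_k$ at once and invoking orthogonality, the paper by an equivalent one-step recursion that drops the same $-\EE[\|F(z^k)-F(z^{k-1})\|^2]$ term), and the second half --- bounding $\sum_k\EE[\|F_{i_k}(z^k)-F_{i_k}(z^{k-1})\|^2]$ by telescoping the cocoercivity-based descent identity for $\|v^k\|^2$ --- is identical. Your explicit remark that $2-\gamma\ell>0$ is needed for the rearrangement is a useful clarification, since the lemma as stated omits the stepsize hypothesis that the paper's proof also implicitly uses.
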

\begin{proof} Let us consider the following chain of reasoning:
\begin{align*}
    \EE[\| F(z^k) - v^k\|^2] 
    =&
    \EE[\|[F(z^{k-1}) - v^{k-1}]  + [F(z^k) - F(z^{k-1})] - [v^k - v^{k-1}]\|^2]
    \\
    =&
    \EE[\|F(z^{k-1}) - v^{k-1} \|^2] + \EE[\| F(z^k) - F(z^{k-1}) \|^2]
    \\
    &+ \EE[\|v^k - v^{k-1} \|^2]
    \\
    &+2 \EE[\langle F(z^{k-1}) - v^{k-1},  F(z^k) - F(z^{k-1})\rangle]
    \\
    &-2 \EE[\langle F(z^{k-1}) - v^{k-1},  v^k - v^{k-1}\rangle]
    \\
    &-2 \EE[\langle F(z^k) - F(z^{k-1}),  v^k - v^{k-1}\rangle]
    \\
    =&
    \EE[\|F(z^{k-1}) - v^{k-1} \|^2] + \EE[\| F(z^k) - F(z^{k-1}) \|^2]
    \\
    &+ \EE[\|v^k - v^{k-1} \|^2]
    \\
    &+2 \EE[\langle F(z^{k-1}) - v^{k-1},  F(z^k) - F(z^{k-1})\rangle]
    \\
    &-2 \EE[\langle F(z^{k-1}) - v^{k-1},  \EE_{i_k}[v^k - v^{k-1}]\rangle]
    \\
    &-2 \EE[\langle F(z^k) - F(z^{k-1}),  \EE_{i_k}[v^k - v^{k-1}]\rangle]
    \\
    =&
    \EE[\|F(z^{k-1}) - v^{k-1} \|^2] - \EE[\| F(z^k) - F(z^{k-1}) \|^2] 
    \\
    &+ \EE[\|v^k - v^{k-1} \|^2]
    \\
    \leq&
    \EE[\|F(z^{k-1}) - v^{k-1} \|^2] + \EE[\|v^k - v^{k-1} \|^2].
\end{align*}
Here we also use that
$$
\EE_{i_k}[v^k - v^{k-1}] = \EE_{i_k}[F_{i_k} (z^k) - F_{i_k} (z^{k-1})] = F (z^k) - F(z^{k-1}).
$$
Running recursion and using $v^0 = F(z^0)$, we have
\begin{align}
    \label{eq:temp1}
    \EE[\| F(z^K) - v^K\|^2] \leq  \sum_{k=1}^K \EE[\|v^k - v^{k-1} \|^2].
\end{align}
In the same way as in Lemma \ref{lem:1}, we can derive
\begin{align*}
    \| v^k \|^2 
    =& 
    \| v^{k-1} \|^2 + \| F_{i_k} (z^k) - F_{i_k} (z^{k-1}) \|^2 + 2 \langle F_{i_k} (z^k) - F_{i_k} (z^{k-1}) , v^{k-1} \rangle
    \\
    =&
    \| v^{k-1} \|^2 + \| F_{i_k} (z^k) - F_{i_k} (z^{k-1}) \|^2 - \frac{2}{\gamma} \langle F_{i_k} (z^k) - F_{i_k} (z^{k-1}) , z^k - z^{k-1} \rangle
    \\
    \leq&
    \| v^{k-1} \|^2 + \| F_{i_k} (z^k) - F_{i_k} (z^{k-1}) \|^2 - \frac{2}{\gamma \ell} \| F_{i_k} (z^k) - F_{i_k} (z^{k-1}) \|^2
    \\
    =&
    \| v^{k-1} \|^2 + \left( \frac{\gamma \ell - 2}{\gamma \ell}\right) \| F_{i_k} (z^k) - F_{i_k} (z^{k-1}) \|^2
    \\
    =&
    \| v^{k-1} \|^2 + \left( \frac{\gamma \ell - 2}{\gamma \ell}\right) \| v^{k} - v^{k-1} \|^2.
\end{align*}
After a small rewriting and with the full expectation, we get
\begin{align*}
    \EE[\| v^{k} - v^{k-1} \|^2]
    \leq& 
    \frac{\gamma \ell}{2 - \gamma \ell} \EE[\| v^{k-1} \|^2 - \| v^{k} \|^2].
\end{align*}
By substituting this into the expression \eqref{eq:temp1} and using $v^0 = F(z^0)$, we finish the proof.
\EndProof
\end{proof}

Let us combine Lemmas \ref{lem:1} and \ref{lem:2} into the main theorem of this paper. 

\begin{theorem} \label{th:1}
Suppose that Assumptions \ref{as:Lipsh} and \ref{as:strmon} hold. Consider SARAH (Algorithm \ref{alg:sarah}) with $\gamma = \tfrac{2}{9 \ell}$ and $K = \tfrac{10 \ell}{\mu}$. Then, we have
\begin{align*}
    \EE[\| F (\tilde z^s) \|^2] 
    &\leq \frac{1}{2} \EE[\| F(\tilde z^{s-1}) \|^2].
\end{align*}
\end{theorem}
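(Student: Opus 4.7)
The plan is to control $\EE[\|F(\tilde z^s)\|^2] = \EE[\|F(z^K)\|^2]$ by splitting it into a piece that is handled by Lemma \ref{lem:1} and a piece that is handled by Lemma \ref{lem:2}. Concretely, I will use the elementary inequality
\begin{align*}
\| F(z^K) \|^2 \;=\; \| v^K + (F(z^K) - v^K) \|^2 \;\leq\; 2 \| v^K \|^2 + 2 \| F(z^K) - v^K \|^2.
\end{align*}
Taking expectation and applying Lemma \ref{lem:1} to the first term and Lemma \ref{lem:2} to the second, together with the initialization $z^0 = \tilde z^{s-1}$, gives
\begin{align*}
\EE[\| F(\tilde z^s) \|^2] \;\leq\; \left( 2(1 - \gamma\mu)^K + \frac{2\gamma \ell}{2 - \gamma \ell} \right) \EE[\| F(\tilde z^{s-1}) \|^2].
\end{align*}

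Next I would plug in the prescribed parameters. With $\gamma = \tfrac{2}{9\ell}$ one has $\gamma \ell = \tfrac{2}{9}$, so
\begin{align*}
\frac{2 \gamma \ell}{2 - \gamma \ell} \;=\; \frac{4/9}{16/9} \;=\; \frac{1}{4}.
\end{align*}
For the first term I would use the standard bound $(1 - \gamma\mu)^K \leq \exp(-K\gamma\mu)$. With $K = \tfrac{10\ell}{\mu}$ and $\gamma\mu = \tfrac{2\mu}{9\ell}$ this yields $K\gamma\mu = \tfrac{20}{9}$, so $2(1-\gamma\mu)^K \leq 2e^{-20/9}$, which is numerically below $\tfrac{1}{4}$ (since $e^{-20/9} < e^{-2} < 0.14$). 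Adding the two contributions gives a prefactor strictly smaller than $\tfrac{1}{2}$, which establishes the claim.

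The only delicate checks are (i) that the step-size $\gamma = \tfrac{2}{9\ell}$ satisfies the hypothesis $\gamma \leq \tfrac{1}{\ell}$ needed by Lemma \ref{lem:1} and keeps $2 - \gamma\ell > 0$ as required by Lemma \ref{lem:2}, and (ii) that the numerical verification $2e^{-20/9} + \tfrac{1}{4} \leq \tfrac{1}{2}$ actually holds, which I would state explicitly at the end. I do not foresee any real obstacle: the two lemmas have already done all the heavy lifting, and this theorem is essentially a parameter-tuning calculation on top of them.
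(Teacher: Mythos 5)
Your proposal is correct and follows essentially the same route as the paper: the decomposition $\|F(z^K)\|^2 \le 2\|v^K\|^2 + 2\|F(z^K)-v^K\|^2$, Lemmas \ref{lem:1} and \ref{lem:2}, the bound $(1-\gamma\mu)^K \le e^{-\gamma\mu K}$, and substitution of the parameters; in fact you are slightly more explicit than the paper, which omits the final numerical check. One tiny caveat: your parenthetical justification $e^{-20/9} < e^{-2} < 0.14$ only gives $2e^{-20/9} < 0.28$, which does not quite establish the needed $2e^{-20/9} \le \tfrac14$; you should instead use $e^{-20/9} < e^{-2.2} < 0.12 < \tfrac18$, after which the claim follows.
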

\begin{proof}
We start from 
\begin{align*}
    \EE[\| F (z^K) \|^2] \leq 2 \EE[\| F(z^K) - v^K \|^2] + 2 [\EE\| v^K \|^2].
\end{align*}
Applying Lemma 1 and 2, we have
\begin{align*}
    \EE[\| F (z^K) \|^2] 
    &\leq 
    \left[\frac{2\gamma \ell}{2 - \gamma \ell} + 2 (1 - \gamma \mu)^K \right]\EE[\| F(z^{0}) \|^2]
    \\
    &\leq 
    \left[\frac{2\gamma \ell}{2 - \gamma \ell} + 2 \exp(- \gamma \mu K) \right]\EE[\| F(z^{0}) \|^2].
\end{align*}
Here we also use that $\gamma \mu \in (0;1)$ (for $\gamma \leq \tfrac{2}{9 \ell}$) and then $(1 - \gamma \mu)\leq \exp(-\gamma\mu)$.
The substitution $\gamma$ and $K$ gives
\begin{align*}
    \EE[\| F (z^K) \|^2] 
    &\leq \frac{1}{2} \EE[\| F(z^0) \|^2].
\end{align*}
We know that $z^0 = \tilde z^{s-1}$ and $z^K = \tilde z^s$ and have
\begin{align*}
    \EE[\| F (\tilde z^s) \|^2] 
    &\leq \frac{1}{2} \EE[\| F(\tilde z^{s-1}) \|^2].
\end{align*}
\EndProof
\end{proof}

Since we need to find a point $z$ such that $F (z) \approx F(z^*) = 0$, we can easily get an estimate on the oracle complexity (number of $F_i$ calls) to achieve precision $\varepsilon$.

\begin{corollary}
Suppose that Assumptions \ref{as:Lipsh} and \ref{as:strmon} hold. Consider SARAH (Algorithm \ref{alg:sarah}) with $\gamma = \tfrac{2}{9 \ell}$ and $K = \tfrac{10 \ell}{\mu}$. Then, to achieve $\varepsilon$-solution ($\EE\| F(\tilde z^S)\|^2 \sim \varepsilon^2$), we need
\begin{align*}
    \mathcal{O}\left( \left[n + \frac{\ell}{\mu} \right]\log_2 \frac{\| F(z^0)\|^2}{\varepsilon^2}\right) \quad \text{oracle calls}.
\end{align*}
\end{corollary}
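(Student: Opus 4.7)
The plan is to iterate the contraction from Theorem \ref{th:1} across the outer loop and then count the per-epoch cost. First I would unroll the inequality $\EE[\| F(\tilde z^s)\|^2] \leq \tfrac{1}{2} \EE[\| F(\tilde z^{s-1})\|^2]$ over $s = 1, \ldots, S$ to obtain $\EE[\| F(\tilde z^S)\|^2] \leq 2^{-S} \| F(\tilde z^0)\|^2$ (the initial point $\tilde z^0$ is deterministic, so no expectation is needed on the right).

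Next I would impose the stopping criterion $\EE[\| F(\tilde z^S)\|^2] \leq \varepsilon^2$, which is guaranteed once $2^{-S} \| F(z^0)\|^2 \leq \varepsilon^2$. Solving for $S$ gives $S = \left\lceil \log_2 \tfrac{\| F(z^0)\|^2}{\varepsilon^2}\right\rceil$, i.e. $S = \mathcal{O}\!\left(\log_2 \tfrac{\| F(z^0)\|^2}{\varepsilon^2}\right)$ outer iterations suffice.

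Then I would tally the oracle cost of a single outer iteration of Algorithm \ref{alg:sarah}. Line 4 evaluates the full operator $F(z^0) = \tfrac{1}{n}\sum_{i=1}^n F_i(z^0)$, which costs $n$ oracle calls. The inner loop runs $K-1$ times, and each iteration queries $F_{i_k}(z^k)$ and $F_{i_k}(z^{k-1})$, i.e. $\mathcal{O}(1)$ oracle calls (two, to be precise). Since $K = \tfrac{10\ell}{\mu}$, one epoch therefore costs $n + \mathcal{O}(\ell/\mu)$ oracle calls.

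Multiplying the per-epoch cost by the number of epochs yields
\begin{align*}
S \cdot \left( n + \mathcal{O}\!\left(\frac{\ell}{\mu}\right)\right) = \mathcal{O}\!\left( \left[n + \frac{\ell}{\mu}\right] \log_2 \frac{\| F(z^0)\|^2}{\varepsilon^2}\right),
\end{align*}
which is exactly the claimed bound. There is no real obstacle here: the contraction factor $\tfrac{1}{2}$ in Theorem \ref{th:1} is independent of both $n$ and the condition number $\ell/\mu$, so only a clean bookkeeping of the two types of cost (the periodic full operator evaluation and the cheap stochastic updates in the inner loop) is required.
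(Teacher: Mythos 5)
Your proof is correct and follows the same route as the paper: unroll the Theorem \ref{th:1} contraction to get $S = \mathcal{O}\bigl(\log_2 \tfrac{\| F(z^0)\|^2}{\varepsilon^2}\bigr)$, then count one full-operator evaluation ($n$ calls) plus two $F_i$ calls per inner iteration, i.e. $n + 2(K-1)$ per epoch, and multiply. Your version is slightly more explicit about the unrolling and the ceiling on $S$, but the argument is the same bookkeeping as in the paper.
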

\begin{proof}
From Theorem \ref{th:1} we need the following number of outer iterations:
\begin{align*}
    S = \mathcal{O}\left( \log_2 \frac{\| F(z^0)\|^2}{\varepsilon^2}\right).
\end{align*}
At each outer iteration we compute the full operator one time, and at the remaining $K-1$ iterations we call the single operator $F_i$ two times per one inner iteration. Then, the total number of oracle calls is
\begin{align*}
    S \times (2 \times (K-1) + n) = \mathcal{O}\left( \left[n + \frac{\ell}{\mu} \right]\log_2 \frac{\| F(z^0)\|^2}{\varepsilon^2}\right).
\end{align*}
\EndProof
\end{proof}

Note that the obtained oracle complexity coincides with the similar complexity for SVRG from \cite{beznosikov2022stochastic}. It is interesting to see how these methods behave in practice.

\section{Experiments}

The aim of our experiments is to compare the performance of different methods for stochastic finite-sum cocoercive variational inequalities. In particular, we use SGD from \cite{loizou2021stochastic}, SVRG from \cite{beznosikov2022stochastic} and SARAH. We conduct our experiments on a finite-sum bilinear saddle point problem:
\begin{align}
    \label{bilinear}
     g(x,y) = \frac{1}{n} \sum\limits_{i=1}^n \left[g_i(x,y) = x^\top A_i y + a^\top_i x + b^\top_i y + \frac{\lambda}{2} \| x\|^2 -  \frac{\lambda}{2} \|y\|^2\right],
\end{align}
where $A_i \in \R^{d \times d}$, $a_i, b_i \in \R^d$. This problem is $\lambda$-strongly convex--strongly concave and, moreover, $L$-smooth with $ L = \|A \|_2$ for $A = \tfrac{1}{n} \sum_{i=1}^n A_i$. We take $n = 10$, $d=100$ and generate matrix $A$ and vectors $a_i, b_i$ randomly, $\lambda = 1$. For this problem the cocoercivity constant $\ell = \tfrac{\|A \|_2^2}{\lambda}$. The steps of the methods are selected for best convergence. For SVRG and SARAH the number of iterations for the inner loops is taken as $\tfrac{\ell}{\lambda}$. We run three experiment setups: with small $\ell \approx 10^2$, medium  $\ell \approx 10^3$ and big $\ell \approx 10^4$.

\begin{figure}[h]
\begin{minipage}{0.33\textwidth}
  \centering
\includegraphics[width =  \textwidth ]{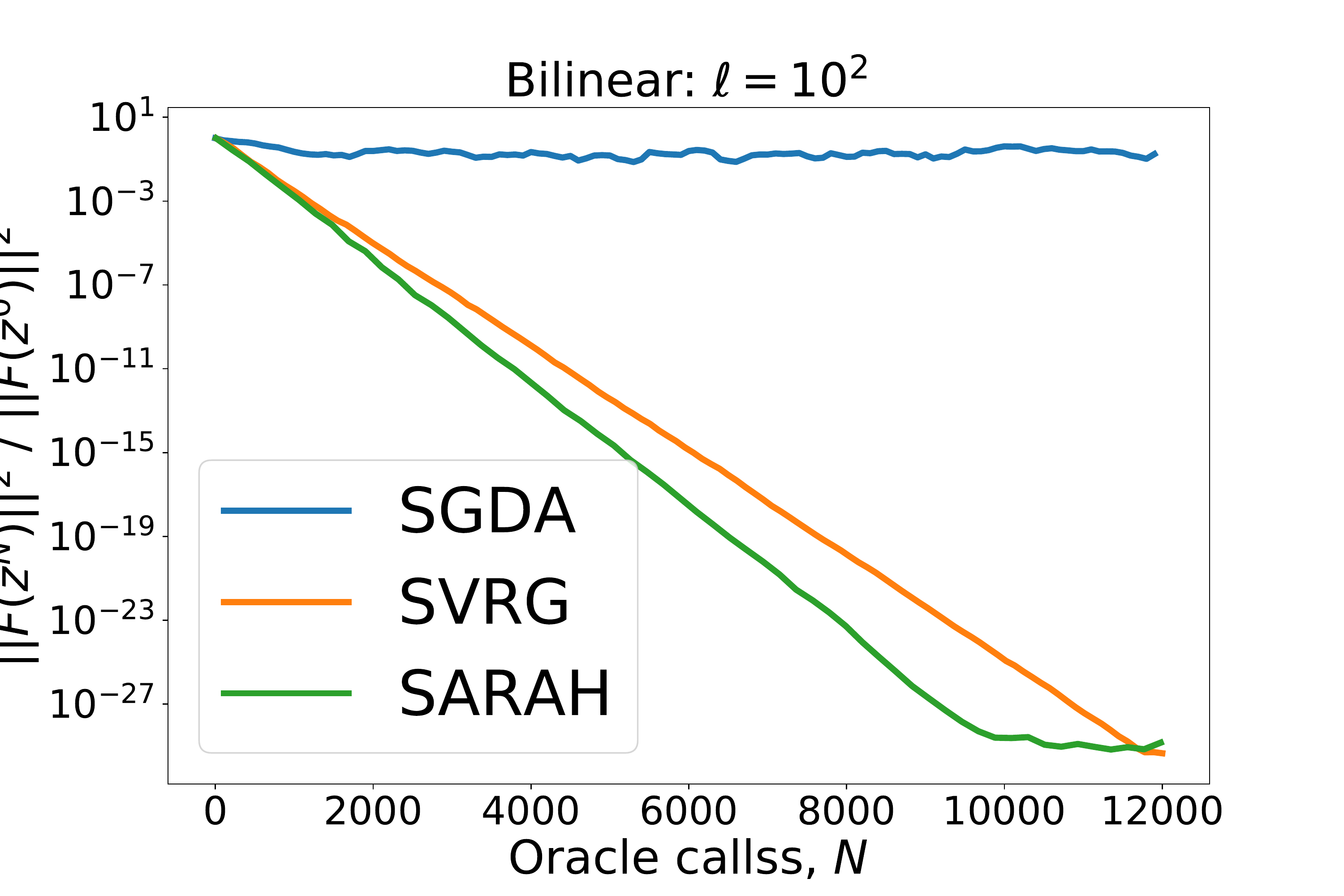}
\end{minipage}%
\begin{minipage}{0.33\textwidth}
  \centering
\includegraphics[width =  \textwidth ]{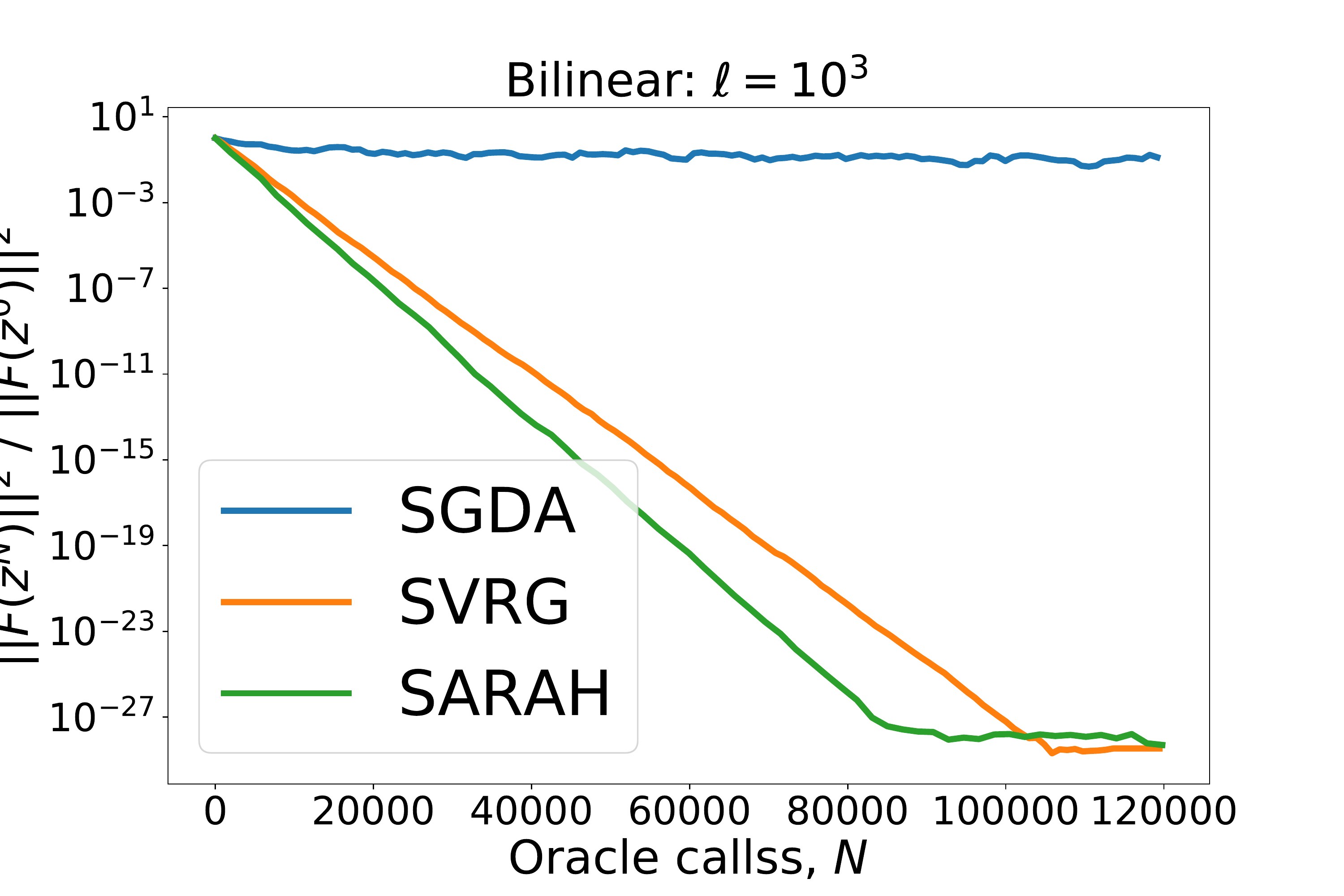}
\end{minipage}%
\begin{minipage}{0.33\textwidth}
  \centering
\includegraphics[width =  \textwidth ]{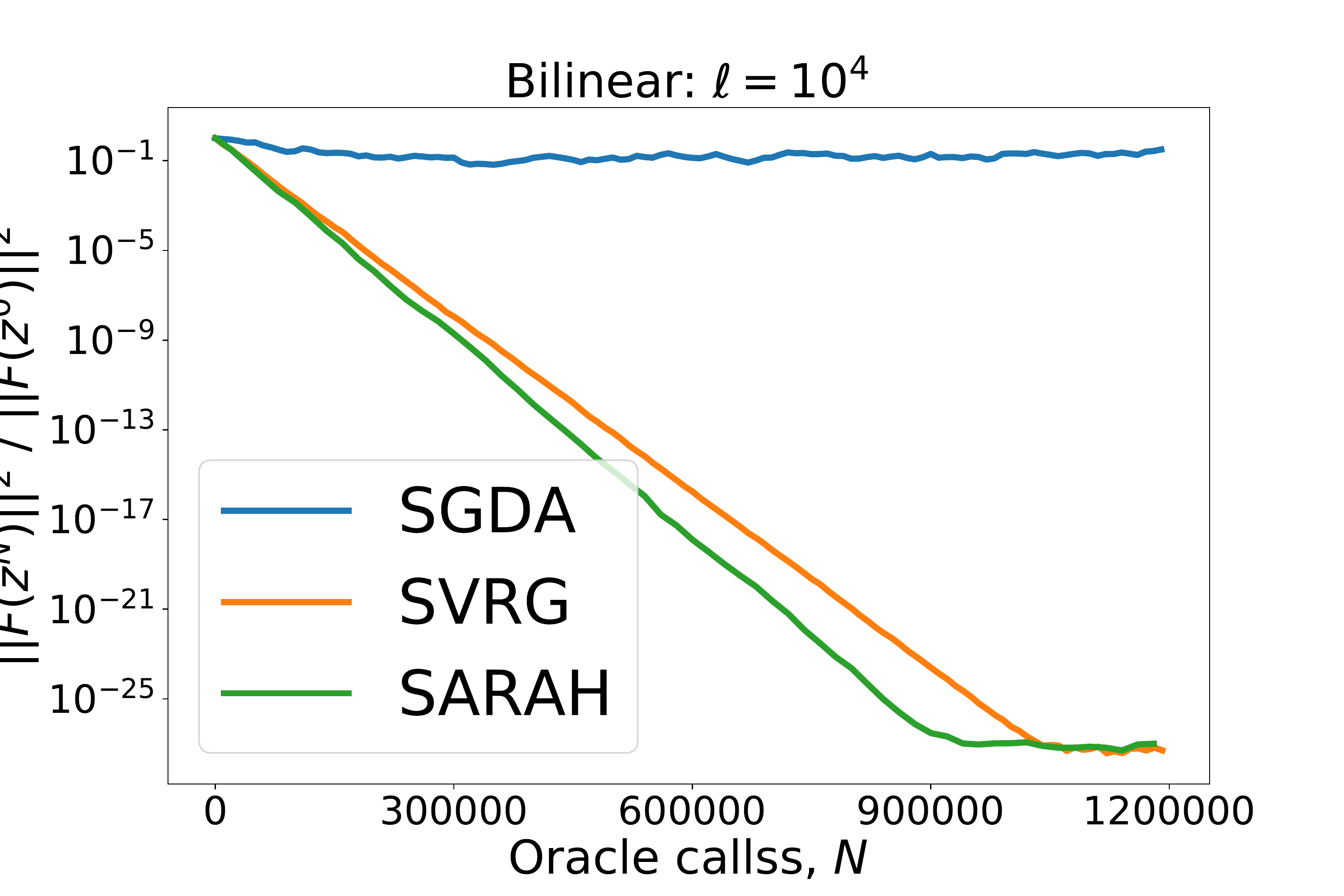}
\end{minipage}%
\\
\begin{minipage}{0.33\textwidth}
  \centering
(a) small $\ell$
\end{minipage}%
\begin{minipage}{0.33\textwidth}
\centering
 (b) medium $\ell$
\end{minipage}%
\begin{minipage}{0.33\textwidth}
\centering
  (c) big $\ell$
\end{minipage}%
\caption{\small Bilinear problem  \eqref{bilinear}: Comparison of state-of-the-art SGD-based methods for stochastic cocoercive VIs.}
    \label{fig:min}
\end{figure}
See Figure \ref{fig:min} for the results. We see that SARAH converges better than SVRG, and SGD converges much slower.


\section*{Acknowledgments}

The authors would like to congratulate Boris Mirkin on his jubilee and wish him good health and new scientific advances.

The work of A. Beznosikov was supported by the strategic academic leadership program 'Priority 2030' (Agreement  075-02-2021-1316 30.09.2021).
The work of A. Gasnikov was supported by the Ministry of Science and Higher Education of the Russian Federation (Goszadaniye), No. 075-00337-20-03, project No. 0714-2020-0005.

\bibliographystyle{splncs04}
\bibliography{ltr}

\newpage 

\appendix

\end{document}